\NeedsTeXFormat{LaTeX2e}

\documentclass[oneside,final,11pt]{amsart}

\usepackage{dsfont}

\newcommand{\Z}{\mathds Z}

\DeclareMathOperator{\Ann}{ann}

\title[A non divisible LCA with torsion free divisible character group]{A locally compact non divisible
abelian group whose character group is torsion free and divisible}

\author{Daniel V. Tausk}
\address{Departamento de Matem\'atica,\hfill\break\indent Universidade de S\~ao Paulo, Brazil}
\email{tausk@ime.usp.br} \urladdr{http://www.ime.usp.br/\~{}tausk}
\subjclass[2000]{22B05}

\date{January 9th, 2011}

\begin{document}

\theoremstyle{plain}\newtheorem{teo}{Theorem}
\theoremstyle{plain}\newtheorem{prop}[teo]{Proposition}
\theoremstyle{plain}\newtheorem{lem}[teo]{Lemma}
\theoremstyle{plain}\newtheorem{cor}[teo]{Corollary}
\theoremstyle{definition}\newtheorem{defin}[teo]{Definition}
\theoremstyle{remark}\newtheorem{rem}[teo]{Remark}
\theoremstyle{plain} \newtheorem{assum}[teo]{Assumption}
\swapnumbers
\theoremstyle{definition}\newtheorem{example}{Example}[section]

\begin{abstract}
It has been claimed by Halmos in \cite{Halmos} that if $G$ is a Hausdorff locally compact topological abelian
group and if the character group of $G$ is torsion free then $G$ is divisible. We prove that such claim is false, by
presenting a family of counterexamples. While other counterexamples are known (see \cite[4.16]{Armacost}), we also present a family of stronger counterexamples,
showing that even if one assumes that the character group of $G$ is both torsion free and divisible, it does not follow that $G$ is divisible.
\end{abstract}

\maketitle

\begin{section}{Introduction}\label{sec:intro}

Let $G$ be an abelian group\footnote{%
Except for the circle group $S^1$, abelian groups will be written additively.}.
Given an integer $n$, we denote by $nG$ and by $G[n]$ the subgroups of $G$ defined by:
\[nG=\big\{nx:x\in G\big\},\quad G[n]=\big\{x\in G:nx=0\big\}.\]
If $G$ is an abelian topological group, then its {\em character group\/} $\hat G$ is the abelian group of all continuous homomorphisms
$\xi:G\to S^1$, where $S^1$ is the (multiplicative) circle group of unitary complex numbers; the group $\hat G$ is endowed with the compact-open
topology. The celebrated {\em Pontryagin duality theorem\/} (see, for instance, \cite{Morris}) states that if $G$ is a Hausdorff locally compact abelian topological
group then its character group $\hat G$ is a Hausdorff locally compact abelian topological group as well and the character group of $\hat G$ is $G$ itself;
more precisely, the map that associates to each $x\in G$ the evaluation map $\hat G\ni\xi\mapsto\xi(x)\in S^1$ is a homeomorphic isomorphism between $G$ and
the character group of $\hat G$.

If $H$ is a subgroup of $G$ then the {\em annihilator\/} of $H$ is the subgroup $\Ann(H)$ of $\hat G$ consisting of all characters
$\xi:G\to S^1$ that are trivial over $H$. Clearly, given an integer $n$, then:
\[\Ann(nG)=\hat G[n].\]
In particular, if $G$ is divisible (i.e., if $nG=G$ for every non zero integer $n$) then its character group
$\hat G$ is torsion free (i.e., $\hat G[n]$ is trivial for every non zero integer $n$). It has been claimed
by Halmos in \cite{Halmos} that the converse is true if $G$ is Hausdorff locally compact. The argument presented
in \cite{Halmos} has a gap: if $\hat G$ is torsion free then $\Ann(nG)$ is trivial for every non zero integer $n$,
but that in principle implies only that $nG$ is dense\footnote{%
If $\Ann(nG)$ is trivial then $nG$ is indeed dense in $G$. Otherwise, Pontryagin duality would give us a non trivial character on the (non trivial) quotient
of $G$ by the closure of $nG$ and such non trivial character would correspond to a non trivial element of $\Ann(nG)$.}
in $G$, not that $nG=G$. It should be observed, however, that the claim made by Halmos
is true if $G$ is either compact or discrete and that the proof of the main result of \cite{Halmos} is not affected by the incorrect claim.

In Section~\ref{sec:firstce}, we will present a family of examples of Hausdorff locally
compact abelian topological groups $G$ such that $nG$ is dense
in $G$ for every non zero integer $n$, but such that $nG\ne G$ for
some non zero integer $n$. In particular, any such group $G$ is an example of a Hausdorff locally compact abelian
topological group that is not divisible, but whose character group is torsion free. While other examples of that phenomenon are known (see \cite[4.16]{Armacost}),
in Section~\ref{sec:stronger} we will also present a family of examples of Hausdorff locally compact abelian topological groups $G$ which are {\em both\/}
divisible and torsion free, but such that $\hat G$ is (torsion free but) not divisible. In particular, by Pontryagin duality, it follows that $\hat G$
is a Hausdorff locally compact abelian topological group whose
character group (which is isomorphic to $G$) is {\em both\/} divisible and torsion free, but still $\hat G$ is not divisible.

\end{section}

\begin{section}{Extending the topology of a subgroup}\label{sec:exttopology}

Let us start by presenting a general construction of a topology on an abelian group from a topology on a given subgroup (the construction is well-known,
see for instance \cite{ClDooSch, ClSch1, ClSch2}).
Let $G$ be an abelian group and $H$ be a subgroup of $G$. Assume that $H$ is endowed with a topology that makes it
into a topological group. We claim that there exists a unique topology on $G$ such that:
\begin{itemize}
\item[(a)] $G$ is a topological group;
\item[(b)] the given topology of $H$ is inherited from $G$;
\item[(c)] $H$ is open in $G$.
\end{itemize}
Such a topology is constructed as follows.
Given $g\in G$, then the coset $g+H$ of $H$ can be endowed with a topology by requiring that the translation map:
\[L_g:H\ni x\longmapsto g+x\in g+H\]
be a homeomorphism. The fact that the translation maps of $H$ are homeomorphisms of $H$ implies that the topology
defined on the coset $g+H$ does not depend on the representative $g$ of the coset. We topologize $G$ by making it
the topological sum of the cosets $g+H$, $g\in G$; that is, we say that $U$ is open in $G$ if $U\cap(g+H)$ is open in
$g+H$ for every $g\in G$. One readily checks that such a topology is the only topology on $G$ satisfying (a), (b) and (c).
Notice that, since the cosets of $H$ are all homeomorphic to $H$ and open in $G$, it follows that if $H$ is
Hausdorff then so is $G$. Moreover, since every compact neighborhood of the neutral element in $H$ is also a
compact neighborhood of the neutral element in $G$, it follows that $G$ is locally compact if $H$ is locally compact.

\end{section}

\begin{section}{The first family of counterexamples}\label{sec:firstce}

Let $A$ be a Hausdorff compact abelian topological group that is not divisible and let $B$ be a divisible abelian
group such that $A$ is a subgroup of $B$ (for instance, let $B=S^1$ and $A$ be a non trivial finite subgroup of $S^1$
endowed with the discrete topology). Let $B^\omega$ denote the group of all sequences $(x_k)_{k\in\omega}$ of elements of $B$
and let $G$ denote the subgroup of $B^\omega$ consisting of those sequences $(x_k)_{k\in\omega}$ such that $x_k$
is in $A$ for $k$ sufficiently large. Let $H=A^\omega$ denote the subgroup of $G$ consisting of sequences in $A$.
We endow $H$ with the product topology and $G$ with the unique topology satisfying (a), (b) and (c) of Section~\ref{sec:exttopology}.
Then $H$ is a Hausdorff compact topological group and thus $G$ is a Hausdorff locally compact topological group.
If $n$ is a non zero integer then the subgroup $nG$ of $G$ consists of those sequences $(x_k)_{k\in\omega}$
such that $x_k$ is in $nA$ for $k$ sufficiently large. If $n_0$ is a non zero integer such that $n_0A\ne A$ then $n_0G\ne G$
and therefore $G$ is not divisible.
We will show that if $n$ is a non zero integer then $nG$ is dense in $G$ and from this it will follow from the discussion
at the introduction that the character group $\hat G$ is torsion free. Let $J$ denote the subgroup of $G$ consisting of
sequences $(x_k)_{k\in\omega}$ in $B$ that are trivial for $k$ sufficiently large. Since $J$ is obviously contained in $nG$ for any non zero integer $n$, it suffices
to prove that $J$ is dense in $G$ in order to establish that $nG$ is dense in $G$ for every non zero integer $n$.
Clearly, $G=H+J$, so that $J$ intersects every coset of $H$. Now let us prove that $J$ is dense in $G$ by proving
that $J\cap(x+H)$ is dense in $x+H$, for every coset $x+H$ of $H$ in $G$.
Since the coset $x+H$ intersects $J$, we can assume that $x\in J$. Thus, the translation map $L_x:H\to x+H$ is
a homeomorphism that carries $J\cap H$ to $J\cap(x+H)$. From the definition of the product topology, it is obvious
that $J\cap H$ is dense in $H$ and therefore $J\cap(x+H)$ is dense in $x+H$. This concludes the proof that
the subgroup $J$ is dense in $G$.

\end{section}

\begin{section}{The family of stronger counterexamples}\label{sec:stronger}

We will now present an example of a Hausdorff locally compact abelian topological group $G$ that is both
divisible and torsion free, but such that its character group $\hat G$ is not divisible. We need a couple
of preliminary lemmas.

\begin{lem}\label{lema:1}
Let $G$ be an abelian divisible topological group. If there exists an open subgroup $H$ of $G$, a non zero
integer $n$ and a {\em discontinuous\/} homomorphism $\phi:H\to S^1$ that is trivial over $nH$ then the character
group $\hat G$ is not divisible.
\end{lem}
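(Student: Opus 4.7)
The plan is to use the discontinuous $\phi$ to manufacture a continuous character $\xi$ of $G$ that does not lie in $n\hat G$, thereby witnessing that $\hat G$ fails to be divisible.

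First, because $S^1$ is divisible (hence injective in the category of abelian groups), I would extend $\phi:H\to S^1$ to some abstract group homomorphism $\Phi:G\to S^1$, not assumed continuous, and then set $\xi:=n\Phi$. The hypothesis that $\phi$ vanishes on $nH$ translates into $\xi|_H=n\phi=0$, so $\xi$ factors through the quotient $G/H$. Since $H$ is open, $G/H$ carries the discrete topology, and hence any homomorphism out of $G/H$ is automatically continuous; consequently $\xi$ is continuous and belongs to $\hat G$.

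The main step is to verify that $\xi\notin n\hat G$. Suppose to the contrary that $\xi=n\eta$ for some $\eta\in\hat G$; then $\psi:=\Phi-\eta$ is an abstract homomorphism $G\to S^1$ with $n\psi=0$, so its image lies in $S^1[n]$. Here the divisibility of $G$ enters decisively: writing any $x\in G$ as $x=ny$, we get $\psi(x)=n\psi(y)=0$, so $\psi\equiv 0$. Hence $\Phi=\eta$, and in particular $\phi=\Phi|_H=\eta|_H$ is continuous, contradicting the hypothesis that $\phi$ is discontinuous.

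The hard part is this third step, because the abstract extension $\Phi$ is highly non-unique: two such extensions differ by an arbitrary abstract homomorphism $G\to S^1$, and it is not \emph{a priori} obvious why the specific $\xi=n\Phi$ cannot possess a continuous $n$-th root. The computation sketched above isolates the relevant ambiguity to abstract homomorphisms $G\to S^1[n]$, and the divisibility of $G$ together with the $n$-torsion of the target forces every such homomorphism to vanish; this is the key algebraic input that makes the argument go through.
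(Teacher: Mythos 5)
Your proposal is correct and follows essentially the same route as the paper: extend $\phi$ to an abstract homomorphism $\Phi$ on $G$ by injectivity of $S^1$, form the continuous character $\xi=n\Phi$ (which is the paper's $\xi(x)=\phi'(nx)$), and use divisibility of $G$ to show that any $n$-th root of $\xi$ in $\hat G$ would force $\Phi$ itself to be continuous. Your reformulation of the final step --- isolating the difference $\psi=\Phi-\eta$ as an $n$-torsion-valued homomorphism killed by divisibility of $G$ --- is just a repackaging of the paper's observation that $\alpha$ and $\phi'$ agree on $nG=G$.
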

\begin{proof}
Since $S^1$ is divisible, $\phi$ extends to a (obviously discontinuous) homomorphism $\phi':G\to S^1$. Consider the
homomorphism $\xi:G\to S^1$ defined by $\xi(x)=\phi'(nx)$, for all $x\in G$. Then $\xi$ is trivial over $H$ and,
since $H$ is open, $\xi$ is continuous. Assuming by contradiction that $\hat G$ is divisible, we can find
a continuous homomorphism $\alpha:G\to S^1$ such that $\alpha(x)^n=\alpha(nx)=\xi(x)$, for all $x\in G$. Then
$\alpha$ and $\phi'$ are equal over $nG$ and since $G$ is divisible, we obtain that $\alpha=\phi'$, contradicting the
continuity of $\alpha$.
\end{proof}

\begin{lem}\label{lema:2}
Let $K$ be an abelian group endowed with a topology\footnote{%
It is not relevant that $K$ be a topological group, i.e., the continuity of the operations of $K$ is not used in the proof.}.
If $K$ admits a proper dense subgroup $D$ then there exists a discontinuous homomorphism from $K$ to $S^1$.
\end{lem}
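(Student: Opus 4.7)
The plan is to produce a non-trivial homomorphism $\phi:K\to S^1$ that vanishes on $D$, and then to observe that such a $\phi$ cannot be continuous because $D$ is dense.

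First I would form the quotient group $K/D$, which is non-trivial by the hypothesis that $D$ is a \emph{proper} subgroup. Picking any non-zero element $a\in K/D$, I would exhibit a homomorphism $\psi:\langle a\rangle\to S^1$ with $\psi(a)\ne 1$: if $a$ has finite order $n$, send $a$ to a primitive $n$-th root of unity; if $a$ has infinite order, send $a$ to any element of $S^1$ other than $1$. Using that $S^1$ is a divisible abelian group and therefore injective in the category of abelian groups, I would extend $\psi$ to a homomorphism $\bar\phi:K/D\to S^1$. Composing with the quotient map $K\to K/D$ yields a homomorphism $\phi:K\to S^1$ that is trivial on $D$ but non-trivial on $K$.

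Finally, I would argue that $\phi$ is discontinuous. If $\phi$ were continuous, then since $\phi(D)=\{1\}$ and $\{1\}$ is closed in the Hausdorff group $S^1$, the preimage $\phi^{-1}(\{1\})$ would be a closed subgroup of $K$ containing $D$; density of $D$ would then force $\phi^{-1}(\{1\})=K$, i.e.\ $\phi\equiv 1$, contradicting the choice of $\bar\phi$.

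I do not expect any serious obstacle: the only genuine input is the injectivity of $S^1$ as an abelian group (equivalently, the standard fact that characters separate points of any abelian group), and the continuity argument requires only that $S^1$ be Hausdorff, which allows us to bypass the fact, emphasized in the footnote, that $K$ is not assumed to be a topological group.
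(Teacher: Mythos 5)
Your proof is correct and follows essentially the same route as the paper: build a non-trivial character on a cyclic subgroup of $K/D$, extend it to all of $K/D$ by divisibility (injectivity) of $S^1$, pull back along the quotient map, and conclude discontinuity from density of $D$. The only difference is cosmetic --- you spell out the final discontinuity step via closedness of $\{1\}$ in $S^1$, which the paper leaves implicit.
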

\begin{proof}
Since $K/D$ is a non trivial abelian group, there exists a non trivial homomorphism $\phi:K/D\to S^1$ (start
with a non trivial $S^1$-valued homomorphism defined over a non trivial cyclic subgroup of $K/D$ and then extend
it to all of $K/D$ using the fact that $S^1$ is divisible). The composition of $\phi$ with the quotient map $K\to K/D$ is
a non trivial homomorphism that is trivial over $D$, and therefore it must be discontinuous.
\end{proof}

\begin{cor}\label{thm:cor}
Let $G$ be an abelian divisible topological group. If there exists an open subgroup $H$ of $G$ and a non zero
integer $n$ such that $H/nH$ (endowed with the quotient topology) has a proper dense subgroup then the character group
$\hat G$ is not divisible.
\end{cor}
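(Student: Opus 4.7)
The plan is to obtain the corollary as a direct composition of Lemma~\ref{lema:2} and Lemma~\ref{lema:1}. Set $K=H/nH$ with the quotient topology. By hypothesis $K$ admits a proper dense subgroup, so Lemma~\ref{lema:2} yields a discontinuous homomorphism $\psi:H/nH\to S^1$.

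Next I would pull $\psi$ back to $H$ along the quotient map $q:H\to H/nH$, defining $\phi=\psi\circ q:H\to S^1$. Two properties need to be checked. First, $\phi$ is trivial over $nH$: this is automatic because $q$ kills $nH$. Second, $\phi$ is discontinuous: here I would invoke the universal property of the quotient topology, which says that a map out of $H/nH$ is continuous if and only if its composition with $q$ is continuous. Since $\psi$ is discontinuous, so is $\phi=\psi\circ q$. This verification is really the only point in the argument that requires a moment's thought, so it is the main (and mild) obstacle.

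Finally, with $H$ an open subgroup of the divisible group $G$, with the fixed non zero integer $n$, and with $\phi:H\to S^1$ a discontinuous homomorphism trivial over $nH$, all the hypotheses of Lemma~\ref{lema:1} are satisfied. Applying that lemma immediately gives that $\hat G$ is not divisible, which is the conclusion of the corollary.
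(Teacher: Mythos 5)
Your proposal is correct and follows exactly the paper's own argument: apply Lemma~\ref{lema:2} to $H/nH$, pull the resulting discontinuous homomorphism back along the quotient map $H\to H/nH$ (the universal property of the quotient topology guaranteeing the pullback is still discontinuous, a point the paper leaves implicit), and then invoke Lemma~\ref{lema:1}. There is nothing to add or correct.
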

\begin{proof}
By Lemma~\ref{lema:2}, there exists a discontinuous $S^1$-valued homomorphism over $H/nH$; its composition with
the quotient map $H\to H/nH$ is a discontinuous $S^1$-valued homomorphism over $H$ that is trivial over $nH$.
The conclusion follows from Lemma~\ref{lema:1}.
\end{proof}

The construction of our family of stronger counterexamples goes as follows. Let $A$ be a Hausdorff compact abelian
non divisible topological group and let $B$ be a torsion free divisible abelian group such that $A$ is a subgroup of $B$.
A concrete example of groups $A$, $B$ satisfying the required conditions will be supplied at the end of the section.
Let $H=A^\omega$ denote the group of all sequences in $A$ endowed with the product topology and
let $G=B^\omega$ be the group of all sequences in $B$, endowed with the unique topology satisfying
(a), (b) and (c) of Section~\ref{sec:exttopology}. The group $H$ is Hausdorff compact
and thus $G$ is Hausdorff locally compact; moreover, like $B$, the group $G$ is both divisible and torsion free.
We use Corollary~\ref{thm:cor} to establish that the character group $\hat G$ is not divisible. Let $n$
be a non zero integer such that $nA\ne A$. We claim that if $H/nH$ is endowed with the quotient topology then
it has a proper dense subgroup. First, we check that the quotient topology of $H/nH$ coincides with the product topology
of $(A/nA)^\omega$, each factor $A/nA$ being endowed with the quotient topology. Namely, if $A/nA$ is endowed with the
quotient topology, then the quotient map $A\to A/nA$ is continuous, open and surjective; therefore, if
$H/nH\cong(A/nA)^\omega$ is endowed with the product topology, then the quotient map $H\to H/nH$ is also continuous,
open and surjective and therefore it is a topological quotient map. This observation proves that the product topology
of $(A/nA)^\omega$ coincides with the quotient topology of $H/nH$. Now, it follows directly from the definition of
the product topology that the subgroup of $H/nH\cong(A/nA)^\omega$ consisting of sequences $(x_k)_{k\in\omega}$ that
are trivial for $k$ sufficiently large is a (proper) dense subgroup. This concludes the proof that $\hat G$ is
not divisible.

Finally, let us present a concrete example of groups $A$, $B$ satisfying the required conditions.
Let $A$ be the group of $p$-adic integers (where $p$ is some fixed prime number) and
$B$ be the $p$-adic field. We have $pA\ne A$, so that $A$ is not divisible; moreover, $B$ is a field of characteristic
zero, so that it is both torsion free and divisible as an abelian group. The fact that $A$ can be made into
a Hausdorff compact topological group follows from the observation that $A$ is (isomorphic to) the character group of the discrete
$p$-quasicyclic group $\Z(p^\infty)$ of elements of $S^1$ whose order is a power of $p$
(see, for instance, \cite[Proposition~3.1]{Harrison}) and that the character group of a discrete topological group is compact.

\end{section}


\begin{thebibliography}{99}

\bibitem{Armacost} D. L. Armacost, {\em The structure of locally compact abelian groups},
Pure and Applied Mathematics: A Series of Monographs and Textbooks \textbf{68}, Marcel Dekker (1981).

\bibitem{ClDooSch} B. Clark, M. Dooley \& V. Schneider, {\em Extending topologies from subgroups to groups}, Topology Proc.\ \textbf{10} (1985), 251---257.

\bibitem{ClSch1} B. Clark \& V. Schneider, {\em Extending topologies}, Internat.\ J. Math.\ and Math.\ Sci. \textbf7 (1984), 621---623.

\bibitem{ClSch2} B. Clark \& V. Schneider, {\em The normal extensions of subgroup topologies}, Proc.\ Amer.\ Math.\ Soc.\ \textbf{97} (1986), 163---166.

\bibitem{Halmos} P. R. Halmos, {\em Comment on the real line}, Bull.\ Amer.\ Math.\ Soc.\ \textbf{50} (1944), 877---878.

\bibitem{Harrison} D. K. Harrison, {\em Infinite abelian groups and homological methods}, Ann.\ of Math.\ (2) \textbf{69}
(1959), 366---391.

\bibitem{Morris} S. A. Morris, {\em Pontryagin duality and the structure of locally compact abelian groups},
London Math.\ Soc.\ Lecture Notes \textbf{29}, Cambridge University Press (1977).

\end{thebibliography}
\end{document}